\numberwithin{equation}{section}
\newcommand{\R}{\mathbb{R}}
\newcommand{\bfn}{{\bf 1}_n}
\newtheorem{theorem}{Theorem}
\newtheorem{definition}{Definition}
\newtheorem{lemma}{Lemma}
\title{Scalable Kernel Quantile Regression: A Preconditioned Augmented Lagrangian Method}
\author{Shengxiang Deng\thanks{School of Data Science, Fudan University, Shanghai, China \texttt{(sxdeng21@m.fudan.edu.cn)}.}, Xudong Li\thanks{School of Data Science, Fudan University, Shanghai, China \texttt{(lixudong@fudan.edu.cn)}.}, Yangjing Zhang\thanks{State Key Laboratory of Mathematical Sciences, Academy of Mathematics and Systems Science, Chinese Academy of Sciences, Beijing, China \texttt{(yangjing.zhang@amss.ac.cn)}.}}
\begin{document}
	\maketitle
    \begin{abstract}
        Kernel quantile regression (KQR) extends classical quantile regression to nonlinear settings using kernel methods, offering a powerful tool for modeling conditional distributions.
		However, its application to large-scale datasets remains challenging due to two intrinsic difficulties: the nonsmoothness of the quantile check loss and the computational burden imposed by the large, dense kernel matrix. Existing state-of-the-art solvers often struggle to handle both challenges simultaneously, leading to limited scalability and high computational cost.
		In this paper, we propose PALM-KQR, a highly efficient two-phase preconditioned  augmented Lagrangian method for large-scale KQR.  In the first phase, an inexact alternating direction method of multipliers (ADMM) is employed to compute a warm-start solution efficiently. The second phase refines this solution using an efficient semismooth Newton augmented Lagrangian method (ALM).
		Our key innovations include a dual
semismooth Newton approach for handling the nonsmooth quantile check loss, and a specialized preconditioning strategy based on low-rank approximations of the kernel matrix, which exploits its structure and mitigates ill-conditioning in the linear systems arising in ALM, thereby significantly accelerating the iterative solvers. Extensive numerical experiments demonstrate that PALM-KQR substantially outperforms existing commercial and specialized KQR solvers in both efficiency and scalability.
    \end{abstract}
    % 90C06: Large-scale problems.
    % 90C25: Convex programming.
    % 62G08: Nonparametric regression.
    \noindent\textbf{2020 Mathematics Subject Classification.} 90C06, 90C25, 62G08.
    
    \noindent\textbf{Keywords.} kernel quantile regression; low-rank kernel approximation; semismooth Newton method.

\section{Introduction} \label{sec:intro}
    Quantile regression (QR) \cite{koenker1978regression} is a fundamental statistical modeling technique that characterizes the conditional distribution of a response variable beyond its conditional mean. While mean regression estimates conditional expectations, QR estimates conditional quantiles at arbitrary levels. This flexibility is crucial for capturing heteroscedasticity, heavy-tailed distributions, and complex conditional dependencies.  As a result, QR has found extensive applications across diverse fields, including risk management \cite{umar2022does}, economics \cite{hendricks1992hierarchical,koenker2001quantile}, %\cite{liu2021modeling},
    healthcare \cite{xu2019does}, and environmental science \cite{long2023esg,pernigo2025probabilistic}, where understanding the full distribution of outcomes is critical for informed decision-making.

    Classical QR is typically formulated with a linear predictor, which limits its ability to capture nonlinear effects.
    Kernel quantile regression (KQR) \cite{takeuchi2006nonparametric,li2007quantile} extends  linear QR by employing kernel methods to implicitly map inputs into a reproducing kernel Hilbert space (RKHS). This nonparametric approach allows KQR to capture complex nonlinear relationships without requiring explicit feature engineering.

    Given data $\{(x_i,y_i),i=1,\dots,n\}$ with inputs $x_i\in\mathbb{R}^p$ and responses $y_i\in \mathbb{R}$, KQR estimates the $\tau$-th quantile of the conditional distribution of $y$ given $x$. A standard formulation of KQR (see, e.g., \cite[(4)]{li2007quantile}) is
    \begin{equation}\label{def:KQR}
    	\min_{\beta\in\mathbb{R},\,f\in\mathcal{H}_k}\sum_{i=1}^n\rho_{\tau}(y_i-\beta-f(x_i))+\frac{\lambda}{2}\|f\|_{\mathcal{H}_k}^2,
    \end{equation}
    where $\mathcal{H}_k$ is the RKHS induced by a positive definite kernel function $k(\cdot,\cdot)$. Typical choices include the radial basis kernel $k(x,x')={\exp}(-\gamma\|x-x'\|_2^2)$ and the Laplacian kernel $k(x,x')={\exp}(-\gamma \|x-x'\|_1)$, with a scale parameter $\gamma>0$.
    %the radial basis kernel $k(x,x')={\exp}(-\|x-x'\|_2^2/2c^2)$ and the Laplacian kernel $k(x,x')={\exp}(-\|x-x'\|_1/2c^2)$, with a scale parameter $c>0$.
    The regularization parameter $\lambda > 0$ controls model complexity, and $\tau \in (0,1)$ is the target quantile level. The quantile check loss function $\rho_{\tau}(\cdot)$ and its conjugate function $\rho_{\tau}^*(\cdot)$ are given by
    \begin{equation*}%\label{def-rho}
    	\rho_{\tau}(z) =
    	\begin{cases}
    		\tau z, & \mbox{if } z>0, \\
    		-(1-\tau)z, & \mbox{if } z\leq 0,
    	\end{cases}\quad  \rho_{\tau}^*(v) = \delta_{[\tau - 1,\tau]}(v) =
    	\begin{cases}
    		0, & \mbox{if } \tau - 1 \leq v \leq \tau, \\
            +\infty, & \mbox{otherwise},
    	\end{cases}
    \end{equation*}
	where $\delta_{[\tau - 1,\tau]}(\cdot)$ is the indicator function of the interval $[\tau - 1,\tau]$.
    By the representer theorem \cite{kimeldorf1971some,wahba1990spline}, the optimal solution $f \in \mathcal{H}_k$ admits a finite-dimensional representation $f(x) = \frac{1}{\lambda}\sum_{j=1}^n \theta_j k(x_j, x)$ for some coefficients $\theta = (\theta_1, \dots, \theta_n)^\top \in \mathbb{R}^n$, and the squared RKHS norm simplifies to $\|f\|_{\mathcal{H}_k}^2 = \frac{1}{\lambda^2} \theta^\top K \theta$, where $K$ is the $n \times n$ kernel matrix with entries $K_{ij} = k(x_i, x_j)$. This representation reformulates \eqref{def:KQR} as a finite-dimensional regularized optimization problem (see \eqref{primal-1}). Finally, when a linear kernel is used, KQR in \eqref{def:KQR} estimates a linear regression function, just as linear QR does.

    To obtain this finite-dimensional formulation, we first extend the definition of $\rho_{\tau}(\cdot)$ to vector arguments as $\rho_{\tau}(z) = \sum_{i=1}^{n} \rho_{\tau}(z_i)$, where $z=(z_1,\dots,z_n)^{\top}\in\mathbb{R}^n$. Similarly, its conjugate function extends to vectors as $\rho_{\tau}^*(v) = \delta_{\mathcal{B}}(v)$, where $v\in\mathbb{R}^n$ and $\mathcal{B}:=[\tau - 1,\tau]^n$ represents an $n$-dimensional box.
    Denoting $y:=(y_1,\dots,y_n)^{\top}$ and ${\bf 1}_n$ as the vector of all ones in $\mathbb{R}^n$, we can reformulate problem \eqref{def:KQR} as
    \begin{equation}\label{primal-1}  \operatornamewithlimits{minimize}_{\beta\in\mathbb{R},\,\theta\in\mathbb{R}^n,\,z\in\mathbb{R}^n} \quad \rho_{\tau}(z) + \frac{1}{2\lambda} \theta^{\top} K \theta \quad
    	\mbox{subject to} \quad z = y - \beta {\bf 1}_n - \frac{1}{\lambda} K\theta.
    \end{equation}
	The computational difficulty of KQR is already visible from this formulation. On the one hand, the quantile check loss is nonsmooth, so algorithms that rely directly on smooth gradients or classical Newton steps cannot be applied without modification. On the other hand, kernelization introduces the dense kernel matrix $K$, whose storage, matrix-vector products, and associated linear systems become increasingly demanding as $n$ grows. Moreover, kernel matrices often exhibit spectral decay \cite{williams2000effect,ma2017diving,altschuler2023kernel}, which can lead to severe ill-conditioning in the linear systems arising from high-accuracy second-order methods. %Therefore, an efficient solver for large-scale KQR should exploit both the nonsmooth structure of the quantile loss and the algebraic structure of the kernel matrix.
    %The dual formulation offers computational advantages for kernel methods.
    The corresponding dual of \eqref{primal-1} is \cite[(6)]{takeuchi2006nonparametric}
    \begin{equation}\label{dual-0}
    	\operatornamewithlimits{maximize}_{\alpha\in\mathbb{R}^n} \quad - \frac{1}{2\lambda} \alpha^{\top} K \alpha + y^{\top}\alpha - \delta_{\mathcal{B}}(\alpha) \quad
    	\mbox{subject to} \quad {\bf 1}_n^{\top} \alpha = 0.
    \end{equation}
    Since $K$ is positive definite, problem \eqref{dual-0} admits a unique solution.
    In this dual form, the nonsmooth check loss appears through the box indicator $\delta_{\mathcal{B}}(\cdot)$, while the kernel matrix enters the dense quadratic term $\alpha^{\top}K\alpha$. We further introduce an auxiliary variable $v$ to decouple the nonsmooth difficulty associated with $\delta_{\mathcal{B}}$ from the computational difficulty caused by the kernel quadratic term involving $K$. This leads to an equivalent consensus form:
    \begin{equation}\label{dual-1}
    	\operatornamewithlimits{minimize}_{\alpha\in\mathbb{R}^n,\,v\in\mathbb{R}^n} \quad \frac{1}{2\lambda} \alpha^{\top} K \alpha - y^{\top}\alpha + \delta_{\mathcal{B}}(v) \quad
    	\mbox{subject to} \quad {\bf 1}_n^{\top} \alpha = 0,\quad \alpha - v = 0.
    \end{equation}
    This formulation provides a structured basis for developing scalable optimization algorithms for solving KQR.

    In the literature, several approaches have been developed for solving KQR. Since the dual formulation \eqref{dual-0} is a convex quadratic program (QP), it can be handled by general interior-point QP solvers. A state-of-the-art implementation of this approach is provided by the R package \texttt{kernlab} \cite{karatzoglou2004kernlab}. Interior-point methods can solve the exact nonsmooth formulation through its QP structure, but their reliance on large dense linear algebra makes them difficult to scale when the kernel matrix is large. Meanwhile, Li et al. \cite{li2007quantile} and Takeuchi et al. \cite{takeuchi2009nonparametric} proposed path-following algorithms for solving KQR. These methods characterize the solution path as the regularization parameter $\lambda$ or the quantile level $\tau$ varies, but they typically rely on certain non-singularity assumptions. Moreover, when only a single parameter value of $\lambda$ or $\tau$ is of interest, their computational efficiency is often inferior to that of interior-point methods, and they are not readily scalable to large problems.

    Another line of work focuses on smoothing and approximation techniques in order to address the nonsmooth quantile check loss $\rho_\tau(\cdot)$. The works of \cite{fernandes2021smoothing, tan2022high, he2023smoothed} proposed several smoothing loss functions to replace the original nonsmooth quantile check loss, thereby enabling the use of standard gradient-based methods. However, such methods aim to solve approximated problems of KQR rather than the exact formulation. Along this line, the \texttt{fastkqr} package \cite{tang2024fastkqr} proposed a framework that solves the original KQR problem through a sequence of smoothing subproblems. Each smoothed subproblem is handled by an accelerated proximal gradient (APG) method, while the smoothing parameter is gradually decreased. Although each APG iteration for a smoothed subproblem costs only $\mathcal{O}(n^2)$, the practical efficiency of the overall framework depends sensitively on the tuning of the smoothing parameter and is further limited by an initial singular value decomposition (SVD), which costs $\mathcal{O}(n^3)$, together with the sublinear convergence of APG. There are also existing works that employ kernel approximation or random features to construct surrogate models for large-scale KQR. For example, \cite{wang2024optimal} used random features to perform stochastic dimensionality reduction. Nevertheless, this line of work mainly emphasizes statistical learning properties rather than the high-accuracy optimization of the original KQR model.

	Overall, current methods still fall short in handling the two fundamental obstacles in KQR simultaneously. Methods that preserve the exact nonsmooth formulation often suffer from the cost of dense kernel linear algebra, whereas smoothing or feature-approximation methods improve tractability by modifying either the loss or the kernel representation. This motivates algorithms that solve the original KQR model while exploiting the special structures of both the box-type nonsmoothness and the kernel matrix.

	In this paper, we propose PALM-KQR, a two-phase preconditioned augmented Lagrangian method (ALM) for solving the dual formulation \eqref{dual-1}. The method addresses the nonsmoothness through a semismooth Newton ALM that exploits the projection structure induced by the box constraint $v \in \mathcal{B}$. It also tackles the kernel-matrix bottleneck through a specialized preconditioner based on a low-rank approximation of $K$, which significantly accelerates the solution of the linear systems arising within the ALM subproblems.
    Briefly, given $\sigma>0$, the augmented Lagrangian function associated with \eqref{dual-1} is
    \begin{equation}
    	\label{lagrangianfunction}
    	\begin{aligned}
    		&L_{\sigma}(\alpha,v;\beta,z)  = \frac{1}{2\lambda} \alpha^{\top} K \alpha - y^{\top}\alpha + \delta_{\mathcal{B}}(v) + {\bf 1}_n^{\top} \alpha \beta + z^{\top}(\alpha-v) + \frac{\sigma}{2}({\bf 1}_n^{\top} \alpha)^2 + \frac{\sigma}{2}\|\alpha - v\|_2^2,
    		%          \\
    		% =&\frac{1}{2\lambda} \alpha^{\top} K \alpha - y^{\top}\alpha + \delta_{\mathcal{B}}(v) + \frac{\sigma}{2}({\bf 1}_n^{\top} \alpha + \beta/\sigma)^2 + \frac{\sigma}{2}\|\alpha - v + z/\sigma\|_2^2 - \frac{1}{2\sigma}(\beta^2 + \|z\|_2^2).
    	\end{aligned}
    \end{equation}
    where $\alpha,v,z \in \mathbb{R}^n,\ \beta\in \mathbb{R}$. The ALM  iteratively addresses a sequence of unconstrained subproblems, progressively approximating the original constrained problem; see~\cite{hestenes1969multiplier,powell1969method,rockafellar1976augmented}. For a nondecreasing sequence of parameters $\sigma_k>0$ and an initial primal variable $(\beta^0,z^0)$, the ALM generates the primal iterative sequence $\{(\beta^k,z^k)\}$ and the dual iterative sequence $\{(\alpha^k,v^k)\}$ as follows:
    \begin{align}
    	(\alpha^{k+1},v^{k+1}) & \approx
    	\operatornamewithlimits{argmin}_{\alpha,v} \, L_{\sigma_k}(\alpha,v;\beta^k,z^k), \label{ALM-subproblem} \\
    	(\beta^{k+1},z^{k+1}) & =  (\beta^k,z^k) + \sigma_k \nabla_{(\beta,z)} L_{\sigma_k}(\alpha^{k+1},v^{k+1};\beta^k,z^k), \label{ALM-multiplier}
    \end{align}
    where $\nabla_{(\beta,z)} L_{\sigma}$ denotes the gradient of $L_{\sigma}$ with respect to $(\beta,z)$.
    However, the quadratic terms and composite structures in \eqref{lagrangianfunction} make updating $(\alpha,v)$ extremely challenging and computationally expensive, particularly for large-scale problems.
    For problems analogous to \eqref{dual-1}, ALM can nevertheless produce high-accuracy solutions efficiently when its subproblems are solved by a semismooth Newton method.
    Recent research \cite{yang2015sdpnal,li2018highly,zhao2010newton,liang2022qppal} has shown that this approach achieves fast convergence when initialized within the fast local convergence region of the method.
    Building on these insights, the first phase of PALM-KQR employs an inexact alternating direction method of multipliers (ADMM) to solve problem \eqref{dual-1} to low or moderate accuracy, providing a sufficiently good initial point for the second phase.
    The second phase of PALM-KQR then refines this solution to high accuracy using a preconditioned semismooth Newton ALM, which leverages the high-quality initial point for fast local convergence.

    The main computational bottleneck in PALM-KQR is the repeated solution of the large-scale symmetric positive definite linear systems arising at each semismooth Newton step. These systems take the form
    \begin{equation*}
    	A_kx=b_k,\quad A_k=K+ \gamma_k {\bf 1}_n{\bf 1}_n^{\top} + L_k ,
    \end{equation*}
    where $K$ is the kernel matrix,  $\gamma_k > 0$ is a scalar parameter, and $L_k$ is a diagonal, positive definite matrix that varies across iterations. For large-scale problems where direct factorization is computationally prohibitive, the Conjugate Gradient (CG) method is a natural choice. However, the convergence rate of CG is governed by the condition number $\kappa(A_k)$. In KQR, $A_k$ is typically ill-conditioned due to the spectral decay of the kernel matrix $K$ \cite{williams2000effect,ma2017diving,altschuler2023kernel}, particularly in high-dimensional settings. This results in a large condition number that severely degrades standard CG performance. To overcome this challenge, we develop a specialized preconditioning strategy. While recent Nystr\"{o}m-based methods \cite{chen2025preconditioning,zhao2022nysadmm,diaz2023robust,chu2024randomized} have proven effective for static systems of the form $K + \mu I$, they are not directly applicable to the dynamic, iteration-dependent systems encountered in our framework. We extend these techniques to handle the specific structure of $A_k$. We construct the preconditioner $P_k$ by applying a Randomly Pivoted Cholesky (RPCholesky) low-rank approximation solely to the ill-conditioned kernel matrix $K$, while retaining the structured terms $\gamma_k {\bf 1}_n{\bf 1}_n^{\top} + L_k$ exactly. Crucially, since $L_k$ is diagonal, the term $\gamma_k {\bf 1}_n{\bf 1}_n^{\top} + L_k$ is a diagonal plus rank-one matrix. This specific structure allows for the efficient computation of the preconditioner's inverse $P_k^{-1}$ via the Woodbury matrix identity (as detailed in Lemma \ref{lemma:preconditioner}), making the strategy computationally viable. This approach theoretically guarantees a bounded condition number for the preconditioned system, ensuring rapid convergence (see Theorem~\ref{thm:pcg rate}). As shown in the numerical experiments, this tailored preconditioner yields significant speedups, particularly for high-dimensional data where standard methods struggle.

    The remainder of this paper is organized as follows. In Section~\ref{sec:two-phase method}, we present the two-phase PALM-KQR framework for solving the dual problem \eqref{dual-1}. Section~\ref{sec:pre} details the construction and theoretical analysis of the effective preconditioner designed to accelerate the solution of the linear systems.
    In Section~\ref{sec:num}, we report numerical experiments to demonstrate the performance of our algorithms.
    Finally, we conclude the paper in Section~\ref{sec:conclusion}.

    %\medskip
    \noindent {\bf Notation}:
    We define $[n] := \{1, 2, \dots, n\}$. The set $\mathbb{N}$ denotes the  nonnegative integers. The vector of all ones in $\mathbb{R}^n$ is denoted by $\mathbf{1}_n$, and $I$ denotes the identity matrix of appropriate dimension.
    %For matrix comparisons,
    Given two matrices $H$ and $A$, we write  $H \succeq A$ if and only if $H - A$ is positive semidefinite. % (PSD). %When considering
    Given a positive definite matrix $A$ and a vector $x$, the $A$-norm of $x$ is defined as $\|x\|_A=\sqrt{x^{\top}Ax}$.
    $\lambda_i(A)$ denotes the $i$-th largest eigenvalue of a symmetric matrix $A$. We use ${\rm tr}(A)$ to denote the trace of a matrix $A$. We use $\kappa (A)$ to denote the condition number of a positive definite matrix $A$.
    %In the context of matrix approximation,
    We use $\lfloor A\rfloor_r$ to denote the best rank-$r$ approximation of a positive semidefinite matrix $A$, obtained via an $r$-truncated eigendecomposition.

	%	We employ left-preconditioned CG, which operates on the unsymmetric matrix $P^{-1}A$, rather than the symmetric matrix $P^{-1/2}AP^{-1/2}$.
	%	While the two methods generate mathematically equivalent iteration sequences(cite), we present our theoretical analysis using the symmetrically preconditioned matrix for clarity and analytical tractability.
	%	A well-designed preconditioner $P$ that effectively approximates $P\approx A^{-1}$ can substantially accelerate convergence.

	\section{PALM-KQR: a two-phase method for solving  \eqref{dual-1}}\label{sec:two-phase method}
We propose a robust two-phase framework, referred to as PALM-KQR, for efficiently solving the KQR dual problem \eqref{dual-1}.
This approach is motivated by the complementary strengths of first-order and second-order optimization strategies. The dual problem \eqref{dual-1} involves a non-differentiable term arising from the box constraints and a dense quadratic term involving the kernel matrix, making direct optimization challenging.
Our two-phase method combines the robust global convergence of first-order methods to navigate the early optimization landscape with the rapid local convergence of second-order approaches. Specifically, Phase I employs an inexact ADMM to rapidly generate a high-quality ``warm start'' solution. Phase II subsequently activates the preconditioned semismooth Newton ALM to refine this estimate, exploiting its fast local convergence properties to achieve high accuracy efficiently.

%	To efficiently solve the KQR dual problem \eqref{dual-1}, we propose a two-phase method PALM-KQR designed for the ALM framework. The core strategy is to decouple the search for a good initial point from the final high-precision refinement. Phase I is designed for speed, rapidly computing a high-quality warm start. Phase II then leverages this starting point to achieve fast local convergence to a high-accuracy solution.
% and then converge to a high-accuracy solution (Phase II).

\SetKwInOut{Input}{Input}
\SetKwInOut{Output}{Output}
\begin{algorithm}
	\caption{Phase I: Inexact ADMM for solving  \eqref{dual-1}}\label{alg:admm}
	\Input{$K \in \mathbb{S}^{n}_{++}$,
		$y \in \mathbb{R}^n$,
		$\tau \in (0, 1)$,
		$\lambda > 0$,
		$\sigma > 0$,
        $\gamma \in (0, \frac{1+\sqrt{5}}{2})$, $\{\varepsilon_k\geq 0\}$ satisfying $\sum_{k=0}^{\infty} \varepsilon_k<\infty$,
        		$\beta^0 \in \mathbb{R}$,
        $z^0,v^0 \in \mathbb{R}^n$,
	}
	\Output{%An approximate solution $ (\alpha^k, v^k)$ to problem \eqref{dual-1}.
the final iterate $ (\alpha^k, v^k, \beta^k, z^k)$.

    %$(\beta, \theta) \approx (\beta^k, \alpha^k)$ to the optimization problem.
    }
    Compute the coefficient matrix \( M = K + \lambda\sigma(I + \mathbf{1}_n\mathbf{1}_n^{\top}) \)\;
	\For{$k = 0, 1, 2, \ldots$}{
		 \( b_k = \lambda(y - \beta^k\mathbf{1}_n - z^k + \sigma v^k) \)\;
		%Compute \( \alpha^{k+1} \) by approximately solving the linear system \( M\alpha = b_k \) such that \( \|b_k - M\alpha^{k+1} \|_2 \leq \sqrt{\lambda\sigma} \varepsilon_k \)\;
        Approximately solve \( M\alpha = b_k \) to find \( \alpha^{k+1} \) satisfying \( \|b_k - M\alpha^{k+1} \|_2 \leq \varepsilon_k \)\;
		$ v^{k+1} = \Pi_{\mathcal{B}}(\alpha^{k+1} + z^k/\sigma) $\;
		$ \beta^{k+1} = \beta^k + \gamma\sigma\mathbf{1}_n^{\top} \alpha^{k+1} $\;
		$ z^{k+1} = z^k + \gamma\sigma(\alpha^{k+1} - v^{k+1}) $.
	}
\end{algorithm}
	\subsection{Phase I: Warm-start via inexact ADMM}

    In the first phase, we employ an inexact ADMM to quickly compute a warm-start solution for the dual problem \eqref{dual-1}. Based on the augmented Lagrangian function in \eqref{lagrangianfunction}, the ADMM generates the primal-dual sequence $\{(\beta^k,z^k;\alpha^k,v^k)\}$ via the following block minimization and dual ascent steps:
    \begin{align*}
    	& \alpha^{k+1}  \approx
    	\operatornamewithlimits{argmin}_{\alpha} \, L_{\sigma}(\alpha,v^k;\beta^k,z^k),\\
        &  v^{k+1}  \approx
    	\operatornamewithlimits{argmin}_{v} \, L_{\sigma}(\alpha^{k+1},v;\beta^k,z^k),\\
    	&(\beta^{k+1},z^{k+1})  =  (\beta^k,z^k) + \gamma \sigma \nabla_{(\beta,z)} L_{\sigma}(\alpha^{k+1},v^{k+1};\beta^k,z^k),
    \end{align*}
    where $\gamma \in (0, \frac{1+\sqrt{5}}{2})$ is the step length. The  minimization with respect to $\alpha$   involves solving an $n\times n$ linear system. To maintain computational efficiency, we solve this system inexactly. Specifically, $\alpha^{k+1}$ is obtained by approximately solving
\begin{equation}\label{eq:admm_alpha_system}
\left[K+\lambda\sigma(I+{\bf 1}_n{\bf 1}_n^{\top})\right]\alpha \approx \lambda(y-\beta^k{\bf 1}_n-z^k+\sigma v^k).
\end{equation}
The preconditioning strategy developed in Section~\ref{sec:pre} can be used to solve this system efficiently.
Subsequently, the update for $v$  admits a closed-form solution via projection onto the box $\mathcal{B}=[\tau - 1,\tau]^n$:
\begin{equation*}
    v^{k+1} = \Pi_{\mathcal{B}}(\alpha^{k+1} + z^k/\sigma),
\end{equation*}
where $\Pi_{\mathcal{B}}(\cdot)$ denotes the Euclidean projection onto $\mathcal{B}$. The variables $\beta$ and $z$ can be updated using the gradient $\nabla_{(\beta,z)} L_{\sigma}(\alpha^{k+1},v^{k+1};\beta^k,z^k)=({\bf 1}_n^{\top}\alpha^{k+1}, \alpha^{k+1}-v^{k+1})$.
The complete procedure is summarized in Algorithm~\ref{alg:admm}. Its convergence properties are formally established in Theorem~\ref{thm:admm_convergence} below, which follows directly from \cite[Theorem 5.1]{chen2017efficient}.
\begin{theorem}\label{thm:admm_convergence}
    Let $\{(\alpha^k, v^k, \beta^k, z^k)\}$ be the sequence generated by Algorithm~\ref{alg:admm}. Then, the sequence $\{(\alpha^k, v^k)\}$ converges to the unique optimal solution of problem \eqref{dual-1}.
\end{theorem}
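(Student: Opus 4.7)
The plan is to reduce the claim to a direct application of Theorem~5.1 in \cite{chen2017efficient}, which establishes convergence of the two-block inexact ADMM with summable subproblem errors and relaxation parameter $\gamma \in (0,(1+\sqrt{5})/2)$.

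First I would cast problem~\eqref{dual-1} in the two-block template of \cite{chen2017efficient}: the $\alpha$-block carries the smooth convex quadratic $\tfrac{1}{2\lambda}\alpha^{\top} K\alpha - y^{\top}\alpha$, the $v$-block carries $\delta_{\mathcal{B}}(v)$, and the two coupling constraints $\mathbf{1}_n^{\top}\alpha = 0$ and $\alpha - v = 0$ are stacked as a single linear system $A\alpha + Bv = 0$ with $A = [\mathbf{1}_n^{\top};\, I_n]$ and $B=[0;\,-I_n]$. Both block objectives are proper closed convex ($K\succeq 0$, $\mathcal{B}$ compact), the solution set is assumed nonempty, and since the constraints are affine, constraint qualification holds automatically. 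The step-size range imposed in Algorithm~\ref{alg:admm} matches exactly that of \cite{chen2017efficient}.

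The key technical step is to verify that the residual-based stopping rule on the $\alpha$-subproblem implies the admissible inexactness condition of \cite[Thm.~5.1]{chen2017efficient}. The exact $\alpha$-subproblem is a strongly convex quadratic whose unique minimizer satisfies $M\alpha = b_k$ with $M = K + \lambda\sigma(I_n + \mathbf{1}_n\mathbf{1}_n^{\top})$ and $b_k=\lambda(y-\beta^k\mathbf{1}_n-z^k+\sigma v^k)$, obtained by setting $\nabla_\alpha L_\sigma=0$. Since $M \succeq \lambda\sigma I_n$, the rule $\|M\alpha^{k+1} - b_k\|_2 \le \varepsilon_k$ bounds the Euclidean distance from $\alpha^{k+1}$ to the exact minimizer by $\varepsilon_k / (\lambda\sigma)$, which is summable by the assumption $\sum_k \varepsilon_k < \infty$. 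The $v$-update is carried out exactly by projection onto $\mathcal{B}$, so no error enters the second block.

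With all hypotheses of \cite[Thm.~5.1]{chen2017efficient} in place, invoking that theorem directly yields the stated convergence of $\{(\alpha^k, v^k)\}$ to a primal optimal solution of~\eqref{dual-1} (and, as a by-product, of $\{(\beta^k, z^k)\}$ to a corresponding dual optimum). The only real obstacle is bookkeeping, namely ensuring that the admissible-error criterion of \cite{chen2017efficient}, typically phrased in terms of the distance from $\alpha^{k+1}$ to the exact subproblem minimizer, is genuinely implied by the linear-system residual tolerance actually used in Algorithm~\ref{alg:admm}; the uniform positive-definiteness estimate $M \succeq \lambda\sigma I_n$ makes this translation transparent and preserves summability with the harmless constant $1/(\lambda\sigma)$.
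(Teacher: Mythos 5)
Your proposal is correct and follows exactly the route the paper takes: the paper's entire proof is to invoke \cite[Theorem 5.1]{chen2017efficient} for the two-block inexact ADMM applied to \eqref{dual-1}, and your casting of the blocks, the step-size range $\gamma\in(0,(1+\sqrt5)/2)$, and the translation of the residual tolerance into a summable subproblem error via $M\succeq\lambda\sigma I_n$ are all sound. You simply supply the verification details that the paper leaves implicit.
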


\subsection{Phase II: Refinement via semismooth Newton ALM}\label{sec:ALM}
	The second phase refines the warm-start solution  obtained from Phase I to achieve high accuracy using a semismooth Newton ALM. This phase follows the update rules \eqref{ALM-subproblem} and \eqref{ALM-multiplier}, and the complete procedure is given in Algorithm~\ref{alg:alm}.
At each iteration $k$, the core computational task is solving the  subproblem \eqref{ALM-subproblem}. We employ a semismooth Newton method in Line 2 for this purpose, denoted as the subroutine  ``{\tt SSN}''  (detailed in Algorithm~\ref{alg:ssnal}).  The global convergence of Algorithm~\ref{alg:alm} to an optimal solution and its local linear convergence rate are guaranteed under standard stopping criteria \cite{rockafellar1976augmented,rockafellar1976monotone}.  For completeness, the detailed convergence analysis and specific stopping criteria are provided in the Appendix~\ref{sec:prelim-con}.

	\SetKwInOut{Input}{Input}\SetKwInOut{Output}{Output}
	\begin{algorithm}
		\caption{Phase II: semismooth Newton ALM for solving \eqref{dual-1}}\label{alg:alm}
		\Input{$K\in\mathbb{S}^{n}_{++}$, $y\in\mathbb{R}^n$, $\tau\in(0,1)$,
$\lambda > 0$, $\sigma_0>0$,
$\beta^0\in\mathbb{R}$,
$\alpha^0,v^0,z^0\in\mathbb{R}^n$ (initialized from Phase I).
		}
		\Output{An approximate primal-dual solution  $ (\alpha^k, v^k, \beta^k,z^k)$ to problem \eqref{dual-1}.
        }
		\For{$k=0, 1, 2, \ldots$}{
			$(\alpha^{k+1},v^{k+1})=$
			{\tt SSN}$(K,y,\tau,\lambda,\beta^k,z^k,\sigma_k,\alpha^k)$\tcp*[l]{via Algorithm~\ref{alg:ssnal}}
			$ \beta^{k+1} = \beta^k + \sigma_k  {\bf 1}_n^{\top} \alpha^{k+1} $\;
			$z^{k+1} = z^k + \sigma_k  (\alpha^{k+1} - v^{k+1})$\;
			Update $\sigma_{k+1} \geq \sigma_k$.
		}
	\end{algorithm}

	The major computational challenge in Algorithm~\ref{alg:alm} lies in solving the ALM subproblem \eqref{ALM-subproblem} efficiently. To simplify this task, we observe that for fixed $\beta^k, z^k, \sigma_k$, the minimization with respect to $v$ admits a closed-form solution $v = \Pi_{\mathcal{B}}(\alpha + z^k/\sigma_k)$.  Substituting this back into the augmented Lagrangian reduces the subproblem to minimizing a continuously differentiable function of $\alpha$:
	\begin{equation}\label{def:phi_k}
	   \min_{\alpha\in\mathbb{R}^n} \phi_k(\alpha) := \frac{1}{2\lambda} \alpha^{\top} K \alpha - y^{\top}\alpha + \frac{\sigma_k}{2}({\bf 1}_n^{\top}\alpha + \beta^k/\sigma_k)^2 + \mathcal{M}^{\sigma_k}_{\delta_{\mathcal{B}}}(\alpha+z^k/\sigma_k),
	\end{equation}
where $\mathcal{M}^{\sigma}_{f}(\cdot)$ denotes the Moreau envelope of a function $f$; see the definition in Appendix~\ref{sec:prelim}.
	Solving this minimization problem is equivalent to finding $\alpha$ such that $\nabla \phi_k(\alpha)=0$. Using the gradient property of the Moreau envelope (see \cite{rockafellar2009variational} and \eqref{def-grad-MY} in the Appendix~\ref{sec:prelim}), %\cite[\eqref{def-grad-MY} in Appendix A]{rockafellar2009variational},
the first-order optimality condition is:
    \begin{equation}\label{eq:optimality_condition}
		0 = \nabla\phi_k(\alpha) = \frac{1}{\lambda} K\alpha - y + \beta^k{\bf 1}_n + \sigma_k{\bf 1}_n{\bf 1}_n^{\top}\alpha + \sigma_k\left(\alpha + z^k/\sigma_k - \Pi_{\mathcal{B}}(\alpha+z^k/\sigma_k)\right).
	\end{equation}
	A key challenge in this approach is that equation \eqref{eq:optimality_condition} contains the nonsmooth projection operation $\Pi_{\mathcal{B}}(\cdot)$, making the classical Newton method inapplicable. To address this, we introduce the semismooth Newton method, a principled generalization of Newton's method \cite{kummer1988newton,qi1993nonsmooth,facchinei2007finite}. This approach is particularly suitable for our problem because $\Pi_{\mathcal{B}}(\cdot)$ is strongly semismooth. Consequently, the gradient $\nabla \phi_k$ is a strongly semismooth function (specifically a piecewise linear function), ensuring the rapid convergence of the Newton iterations.
    % The theoretical foundations for the superlinear and quadratic convergence of this approach were established by Kummer \citep{kummer1988newton} and later extended by Qi and Sun \citep{qi1993nonsmooth}.
    %Using this method, we can efficiently solve equation \eqref{eq:optimality_condition}, where the gradient $\nabla \phi_k$ is a strongly semismooth function, specifically a piecewise linear function.

	To implement the method, we first need to characterize the generalized derivative of the nonsmooth components. The Clarke generalized Jacobian of the piecewise linear function $\Pi_{\mathcal{B}}(\cdot)$ is given as follows:
	\begin{align*}
		&\partial \Pi_{\mathcal{B}}(v) =\partial \Pi_{[\tau-1,\tau]}(v_1)\times\cdots\times \partial \Pi_{[\tau-1,\tau]}(v_n),\,v\in\mathbb{R}^n, \\
		&\mbox{where } \partial \Pi_{[\tau-1,\tau]}(v_1) =
		\begin{cases}
			\{1\}, &  \mbox{ if }  \tau-1 <  v_1 < \tau, \\
			\{0\}, & \mbox{ if } v_1 < \tau-1 \mbox{ or } v_1 > \tau, \\
			[0,1], & \mbox{ if } v_1 = \tau -1 \mbox{ or } v_1 = \tau.
		\end{cases}
	\end{align*}
	Based on this, we can define the collection of generalized Hessians of the function $\phi_k$ as the following set-valued mapping:
	\begin{equation}\label{eq:hessian}
		\partial^2 \phi_k (\alpha) =
		\left\{ \frac{1}{\lambda} K + \sigma_k \left({\bf 1}_n{\bf 1}_n^{\top} + I - S \right) :  S\in\partial \Pi_{\mathcal{B}}(\alpha + z^k/\sigma_k)
		\right\}.
	\end{equation}
	Notably, since $S$ is a diagonal matrix with entries in $[0,1]$, the matrix $I - S$ is positive semidefinite. Therefore, given that $K$ is positive definite, any matrix in the set \eqref{eq:hessian} is positive definite, ensuring the well-posedness of the Newton linear systems.

	\begin{algorithm}[htbp]
		\caption{Semismooth Newton method for solving subproblem \eqref{ALM-subproblem}: $(\alpha,v)=$ {\tt SSN}$(K,y,\tau,\lambda,\beta,z,\sigma,\alpha^0)$
		}\label{alg:ssnal}
		\Input{$K\in\mathbb{S}^{n}_{++}$, $y\in\mathbb{R}^n$, $\tau\in(0,1)$, $\lambda > 0$,
		$\beta\in\mathbb{R}$, $z\in\mathbb{R}^n$, $\sigma>0$, $\alpha^0\in\mathbb{R}^n$,
        $\bar{\eta}\in(0,1)$,
        $\iota \in (0,1]$,
        $\mu \in (0,1/2)$,
        $r \in (0,1)$, $\tau_1,\tau_2\in(0,1)$.
        }
		\Output{An approximate solution $(\alpha^t,v^t)$ to  subproblem \eqref{ALM-subproblem}.}
		\For{$t=0, 1, 2,\ldots$}{
			% \If{$\|\nabla\phi_k(\alpha^{t})\|_2 \leq \varepsilon$}{{\bf stop}\;}
			Compute  $\nabla\phi_k(\alpha^{t})$ via \eqref{eq:optimality_condition} and choose  $H_t \in \partial^2 \phi_k (\alpha^{t})$ via \eqref{eq:hessian}\;
			Let $\varepsilon_t={\tau_1}\min\{\tau_2,\|\nabla \phi_k(\alpha^t)\|_2\}$\;
 Apply  preconditioned conjugate gradient (PCG) to find an approximate solution $d^{\,t}$ to
			\begin{equation}\label{newton-sys}
				(H_t + \varepsilon_t I) \, d = -\nabla\phi_k(\alpha^{t})
			\end{equation}
			satisfying $\| (H_t + \varepsilon_t I) \, d^{\,t}+\nabla\phi_k(\alpha^{t})\|_2 \leq \min(\bar{\eta},\|\nabla\phi_k(\alpha^{t})\|_2^{1+\iota})$\;
			Set $c_t =1$\;
			\While{$\phi_k(\alpha^{t} + c_t \, d^{\,t}) > \phi_k(\alpha^{t}) + \mu c_t \langle \nabla\phi_k(\alpha^{t}),d^{\,t} \rangle$}{
            $c_t = r c_t$\tcp*[l]{backtracking line search}}
			$ \alpha^{\,t+1} = \alpha^{t} + c_t \,d^{\,t}$\;
		}
		Compute $v^{t}:= \Pi_{\mathcal{B}}(\alpha^{t} + z/\sigma)$.
	\end{algorithm}

	The detailed steps of the semismooth Newton method for solving $\min \phi_k(\alpha)$ (i.e., finding a root $\nabla \phi_k(\alpha) = 0$ in \eqref{eq:optimality_condition}) are presented in Algorithm~\ref{alg:ssnal}. Like the classical Newton method for smooth equations, the semismooth Newton method with a unit step length only works locally near the solution. %In order to make sure that the overall algorithm converges,
    To ensure global convergence, we adopt a standard backtracking line search strategy in %the second step  of
    Lines 5--8 of Algorithm~\ref{alg:ssnal}. This strategy is well-founded because the direction $d^{\,t}$ computed in Line 4 is guaranteed to be a descent direction for the objective function $\phi_k$; for details, see \cite[Section 8.3.3]{facchinei2007finite}.
    % We shall prove the convergence and the superlinear convergence rate of the generated sequence $\{\alpha^{t}\}_{t\geq 1}$ in the next subsection.
	%Having established the algorithmic framework,

The convergence properties of Algorithm~\ref{alg:alm}, together with those of the semismooth Newton subroutine in Algorithm~\ref{alg:ssnal}, are summarized in Appendix~\ref{sec:prelim-con}.

	\section{An effective preconditioner for linear system \eqref{newton-sys}}\label{sec:pre}

One of the main computational bottlenecks of Algorithm~\ref{alg:ssnal} is solving the $n\times n$ linear system \eqref{newton-sys},
where $n$ is the number of data points and can be very large in practice. For convenience, we repeat the linear system as follows:
\begin{equation}\label{eqn-K}
\left( K + \lambda \sigma_k {\bf 1}_n{\bf 1}_n^{\top} + L \right) d = b, \quad L:=\lambda \sigma_k \left( I - S \right) + \lambda \varepsilon_t I.
\end{equation}
The eigenvalues of the kernel matrix $K$ tend to decay rapidly and  even exponentially (see \cite{williams2000effect,ma2017diving,altschuler2023kernel}), leading to severe ill-conditioning of the  linear system \eqref{eqn-K}. This issue is particularly severe in high dimensions, where the more pronounced eigenvalue decay leads to extreme ill-conditioning, significantly hindering the convergence speed and numerical stability of iterative solvers such as the CG method. These challenges are not unique to our setting; they are well known and pervasive in kernel-based methods, which involve the kernel matrix and therefore face significant scalability issues on large data. A direct Cholesky-based method for solving \eqref{eqn-K} requires $\mathcal{O}(n^3)$ operations, making it impractical when $n\geq 10^4$.

To enhance scalability, we incorporate preconditioning techniques into the CG method. We find a low-rank approximation $\widehat{K}$ of the kernel matrix $K$ and then define %the preconditioner as
\begin{equation} \label{eq:preconditioner}
    P := \widehat{K} + \lambda \sigma_k {\bf 1}_n{\bf 1}_n^{\top} + L.
\end{equation}
While various low-rank approximation methods exist, the Randomly Pivoted Cholesky (RPCholesky) algorithm has emerged as a state-of-the-art choice for preconditioning due to its compelling balance of computational efficiency and approximation quality \cite{chen2023randomly, diaz2023robust}. We therefore employ this method. For a given positive semidefinite matrix $K \in \mathbb{S}^n_+$ and a target rank $r$, the algorithm, denoted by $F = \mathrm{RPCholesky}(K,r)$, produces a factor matrix $F \in \mathbb{R}^{n \times r}$ that yields the approximation $\widehat{K} = FF^\top \approx K$.

Building on the approximation \eqref{eq:preconditioner}, we use the preconditioner via the map $d \to P^{-1}d$ in the preconditioned CG (PCG) method for solving \eqref{eqn-K}. The following lemma describes how to compute $P^{-1}$ efficiently.

\begin{lemma}\label{lemma:preconditioner}
For $F\in\mathbb{R}^{n\times r}$,  nonsingular  $L\in\mathbb{R}^{n\times n}$, and  $\gamma>0$,
\begin{equation*}
(FF^{\top} + \gamma {\bfn}{\bfn}^{\top} + L)^{-1}
= L^{-1} - L^{-1} \!\begin{bmatrix}F & {\bfn}\end{bmatrix}
\!\left(
\begin{bmatrix}I_r & \\ & \gamma^{-1}\end{bmatrix}
+ \begin{bmatrix}F^{\top} \\ {\bfn}^{\top}\end{bmatrix}
L^{-1} \begin{bmatrix}F & {\bfn}\end{bmatrix}
\right)^{\!-1}
\!\begin{bmatrix}F^{\top} \\ {\bfn}^{\top}\end{bmatrix} L^{-1}.
\end{equation*}
\end{lemma}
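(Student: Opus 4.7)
The plan is to recognize that $FF^{\top} + \gamma \bfn\bfn^{\top}$ is a symmetric low-rank perturbation of $L$ of rank at most $r+1$, which makes the Sherman--Morrison--Woodbury identity directly applicable.

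First, I would collapse the two rank-updating terms into a single symmetric factorization. Setting $U := [\,F \;\; \bfn\,] \in \mathbb{R}^{n \times (r+1)}$ and $C := \operatorname{diag}(I_r, \gamma) \in \mathbb{R}^{(r+1) \times (r+1)}$, a direct block multiplication yields $UCU^{\top} = FF^{\top} + \gamma \bfn\bfn^{\top}$, so the matrix to be inverted is exactly $L + UCU^{\top}$.

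Next, I would apply the Woodbury identity $(L + UCU^{\top})^{-1} = L^{-1} - L^{-1} U (C^{-1} + U^{\top} L^{-1} U)^{-1} U^{\top} L^{-1}$. Since $\gamma > 0$, the block-diagonal matrix $C$ is invertible with $C^{-1} = \operatorname{diag}(I_r, \gamma^{-1})$; substituting this and re-expanding $U$ as $[\,F \;\; \bfn\,]$ reproduces the displayed formula verbatim.

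The only point that requires a second look is the invertibility of the inner $(r+1) \times (r+1)$ matrix $C^{-1} + U^{\top} L^{-1} U$, which Woodbury implicitly requires. In every instance where the lemma is used in the paper, namely for $L$ arising in \eqref{eqn-K}, the matrix $L$ is symmetric positive definite, so $U^{\top} L^{-1} U \succeq 0$ and $C^{-1} \succ 0$ combine to make the inner matrix positive definite and hence invertible. I do not anticipate any deeper obstacle: the entire content of the lemma is the structural recognition that the preconditioner \eqref{eq:preconditioner} admits an $(r+1)$-dimensional symmetric low-rank update form, after which Woodbury delivers the inverse essentially for free.
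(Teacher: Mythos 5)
Your proposal is correct and matches the paper's own argument: the paper likewise factors $FF^{\top}+\gamma\mathbf{1}_n\mathbf{1}_n^{\top}$ as $[\,F \;\; \mathbf{1}_n\,]\,\mathrm{diag}(I_r,\gamma)\,[\,F \;\; \mathbf{1}_n\,]^{\top}$ and invokes the Sherman--Morrison--Woodbury formula. Your extra remark on the invertibility of the inner $(r+1)\times(r+1)$ matrix is a welcome, if minor, clarification beyond the paper's two-line proof.
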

\begin{proof}
First, note that
$
FF^{\top} + \gamma {\bfn}{\bfn}^{\top}
= \begin{bmatrix} F & {\bfn} \end{bmatrix}
\begin{bmatrix} I_r & \\ & \gamma \end{bmatrix}
\begin{bmatrix} F^{\top} \\ {\bfn}^{\top} \end{bmatrix}
$.
Applying the Sherman-Morrison-Woodbury formula,
we obtain the desired expression.
\end{proof}

	Now, we analyze the computational cost of applying the preconditioner $P^{-1}$ to a vector $d$. Since $L$ defined in \eqref{eqn-K} is  diagonal, computing $L^{-1}$ requires only $\mathcal{O}(n)$ operations.
Next, forming the $(r+1)\times (r+1)$ matrix
$ \begin{bmatrix}
		I_r&\\&\frac{1}{\lambda\sigma_k}
	\end{bmatrix}+\begin{bmatrix}F^{\top}\\{\bfn}^{\top}\end{bmatrix} L^{-1}\begin{bmatrix}F&{\bfn}\end{bmatrix}$
requires $\mathcal{O}(r^2n)$ operations, while inverting this matrix costs $\mathcal{O}(r^3)$.
Finally, computing $P^{-1}d$ involves a sequence of matrix-vector multiplications using Lemma~\ref{lemma:preconditioner}. Therefore, the overall computational cost of applying  $P^{-1}$ to $d$ is $\mathcal{O}(r^3+r^2n)\approx \mathcal{O}(r^2n)$. %$\mathcal{O}(r^3+r^2n+rn)\approx \mathcal{O}(r^2n)$.
The preconditioner can also be employed in Phase I for solving the linear system \eqref{eq:admm_alpha_system}.

We now analyze the convergence properties of the PCG method with the preconditioner $P$ defined in \eqref{eq:preconditioner}. Denote the coefficient matrix of the linear system \eqref{eqn-K} as
\begin{equation}\label{eq-A}
A:= K + \lambda \sigma_k {\bf 1}_n{\bf 1}_n^{\top} + L.
\end{equation}
We study the symmetrically preconditioned matrix $P^{-1/2}AP^{-1/2}$ for the theoretical analysis rather than the nonsymmetric matrix $P^{-1}A$ used in our practical left-preconditioned CG implementation. This symmetric form facilitates convergence analysis while being mathematically equivalent in terms of the CG iteration sequence \cite{saad2003iterative}.
%This approach offers greater analytical clarity while maintaining mathematical equivalence between the iteration sequences generated by both methods \cite{saad2003iterative}.
%While our theorem and proof ideas are derived from
Our analysis follows \cite[Theorem~2.2]{diaz2023robust}, but extends the coefficient matrix from the case $K + \gamma I$ to the more general form given by \eqref{eq-A}.

	Next, we present the main performance guarantee for the PCG method equipped with the RPCholesky preconditioner $P$. To this end, we first recall in Definition~\ref{def-tail} the notion of $\mu$-tail rank for a positive definite matrix \cite[Definition~2.1]{diaz2023robust}, which quantifies eigenvalue decay and plays a central role in Theorem~\ref{thm:pcg rate}. The proof, adapted from \cite[Theorem~2.2]{diaz2023robust}, is provided in Appendix~\ref{app:thm3proof}.

\begin{definition}\label{def-tail}
    Let \( A \in \mathbb{S}^n_{++} \) be a positive definite matrix with eigenvalues \( \lambda_1(A) \geq \lambda_2(A) \geq \cdots \geq \lambda_n(A) > 0 \). The \( \mu \)-tail rank of \( A \) is defined as
    \[
    \mathrm{rank}_\mu(A) := \min \left\{ t \in \mathbb{N} : \sum_{i = t+1}^n \lambda_i(A) \leq \mu \right\}.
    \]
\end{definition}

\begin{theorem}\label{thm:pcg rate}
		Let $K$ be a positive semidefinite matrix, $\delta \in (0,1)$, and $\varepsilon \in (0,1)$ be an error tolerance. Construct a random low-rank approximation $\widehat{K}$ of $K$ using the RPCholesky algorithm with approximation rank $r$ satisfying
		\begin{equation*}%\label{eq:approx rank}
			r \geq \min\Biggl\{n,{\rm rank}_{\mu}(K)\Bigl(1+\log\frac{\mathrm{tr} (K)}{{\rm tr}(K-\lfloor K\rfloor_{{\rm rank}_{\mu}(K)})}\Bigr)\Biggr\}. %\quad \mu = \lambda \varepsilon_t.
		\end{equation*}
Let
$$
L \succeq \mu I, \quad P= \widehat{K} + \lambda \sigma_k {\bf 1}_n{\bf 1}_n^{\top} + L, \quad A=K + \lambda \sigma_k {\bf 1}_n{\bf 1}_n^{\top} + L.
$$
%be defined in \eqref{eq:preconditioner} and \eqref{eq-A}, respectively.
Then, with probability at least $1-\delta$, the preconditioned matrix satisfies
		\begin{equation}\label{bound:cond_no}
			\kappa\Bigl(P^{-1/2} A P^{-1/2}\Bigr)\leq {3}/{\delta}.
		\end{equation}
		Furthermore, on the event that the bound \eqref{bound:cond_no} holds, when applying the PCG method with preconditioner $P$ to solve the linear system $Ad=b$, the iterate $d^{\,t}$ satisfies
		\begin{equation*}
			\|d^{\,t}-d^*\|_{A}\leq \varepsilon\|d^*\|_{A}
		\end{equation*}
		at any iteration $t\geq
\delta^{-1/2}\log({2}/{\varepsilon})$,
%$t\geq\frac{1+ \sqrt{3/\delta}}{2}\log\frac{2}{\varepsilon}$,
where $d^*$ denotes the exact solution.
	\end{theorem}
The RPCholesky algorithm constructs the preconditioner $P$ by sampling $r$ columns of $K$ without replacement. Theorem~\ref{thm:pcg rate} shows that this preconditioning strategy is particularly effective when $K$ has low numerical rank. In particular, if ${\rm rank}_\mu(K) = \mathcal{O}(\sqrt{n})$, then choosing $r=\mathcal{O}(\sqrt{n})$ is sufficient to bound the condition number of the preconditioned system. This bound is independent of $n$ for fixed $\delta$. Consequently, the preconditioned conjugate gradient (PCG) method reaches a prescribed tolerance $\varepsilon$ in $\mathcal{O}(\log(1/\varepsilon))$ iterations, yielding an overall complexity of $\mathcal{O}(n^2)$ for solving the linear system \eqref{eqn-K}.

    While the theoretical analysis provides guidance for choosing the estimation rank $r$, implementing this choice in large-scale settings remains challenging because computing ${\rm rank}_\mu(K)$ requires spectral information on $K$. In addition, the appropriate value of $r$ may vary with the lower-bound parameter $\mu$ in $L \succeq \mu I$, which can change across algorithm iterations. In our setting, one may take $\mu=\lambda\varepsilon_t$. Alternatively, choosing $r$ as large as possible may reduce the number of PCG iterations, but it also increases the per-iteration cost and can therefore be computationally inefficient overall.
    To address this issue, we adopt a heuristic strategy for selecting $r$. We first set $r_0=\sqrt{n}$ and compute an initial approximation $\widehat{K}_0=F_0F_0^{\top}$ using the RPCholesky algorithm. We then compute the eigenvalues $\lambda_1\geq\dots\geq\lambda_{r_0}$ of $\widehat{K}_0$ and choose a threshold $\xi\in(0,1)$. Since $F_0^{\top}F_0$ and $\widehat{K}_0$ share the same nonzero eigenvalues, this spectral computation costs only $\mathcal{O}(n^{3/2})$, rather than $\mathcal{O}(n^3)$. The final rank $r$ is taken to be the smallest integer satisfying $\xi\lambda_1\geq\lambda_r$, after which we rerun RPCholesky to construct $\widehat{K}$. Importantly, $\widehat{K}$ is computed only once throughout the entire algorithm.
    This strategy selects $r$ at a moderate computational cost and improves the overall algorithmic performance, as shown in the numerical experiments. Rather than relying on the eigenvalues of $K$ directly, it determines $r$ adaptively from the eigenvalues of $\widehat{K}_0$.
	It is worth noting that even for matrices lacking a clear low-rank structure, our preconditioning method remains effective. Although alternative approaches for handling such cases have been proposed in the literature \cite{zhao2024adaptive}, our current method offers a reasonable balance between theoretical guarantees and practical efficiency for the problems considered in this paper, even if it is not necessarily optimal. A comprehensive exploration of adaptive techniques for varying problem structures remains an important direction for future research.

\section{Numerical experiments}\label{sec:num}
In this section, we evaluate the computational performance of PALM-KQR for solving the KQR dual problem \eqref{dual-1} and compare it with benchmark solvers on synthetic and real-world datasets. We measure the quality of an approximate primal-dual solution $(\alpha, v, \beta, z)$ using the relative Karush-Kuhn-Tucker (KKT) residual
	\begin{equation*}
		\eta_{\rm kkt} = \max\{\eta_p,\eta_d,\eta_c\},
	\end{equation*}
where the primal feasibility ($\eta_p$), dual feasibility ($\eta_d$), and complementarity ($\eta_c$) residuals are given by
\begin{equation*}
    \eta_p = \frac{\|z - y  + \beta {\bf 1}_n + \frac{1}{\lambda} K\alpha\|_2}{1+\|y\|_2}, \quad
    \eta_d = \frac{\sqrt{({\bf 1}_n^{\top} \alpha)^2 +\|\alpha - v\|_2^2} }{1+\|\alpha\|_2}, \quad
    \eta_c = \frac{\|v - \Pi_{\mathcal{B}}(z + v)\|_2}{1+\|v\|_2}.
\end{equation*}
Additionally, we compute the relative duality gap
	\begin{equation*}
\eta_{\rm gap} = \frac{|{\rm obj}_P-{\rm obj}_D|}{1+|{\rm obj}_P| + |{\rm obj}_D|},\quad
\text{where} \quad
{\rm obj}_P = \rho_{\tau}(z)+ \frac{1}{2\lambda}\alpha^{\top}K\alpha, \quad
{\rm obj}_D = - \frac{1}{2\lambda}\alpha^{\top}K\alpha + y^{\top}\alpha.
	\end{equation*}
Let $\epsilon > 0$ be a given accuracy tolerance. Algorithm~1 (Phase~I warm-start) is terminated when $\max\{\eta_{\rm kkt}, \eta_{\rm gap}\} \leq 10^{-3}$ or when the iteration count reaches 100. Algorithm~2 (Phase~II refinement) is terminated when $\max\{\eta_{\rm kkt}, \eta_{\rm gap}\} \leq \epsilon$ or when the iteration count reaches 1000.

We compare  the performance of our algorithm against two benchmark solvers.
The first is Gurobi (academic license, version 12.00), a state-of-the-art commercial solver for convex QP. We configure Gurobi's barrier interior-point method to solve the convex QP formulation \eqref{dual-0} with a convergence tolerance matching our target accuracy $\epsilon$.
The second benchmark is fastKQR \cite{tang2024fastkqr}, a specialized R package implementing a finite smoothing algorithm. We run fastKQR using its standard stopping criterion  described in \cite{tang2024fastkqr} with a tolerance of $\epsilon$ and a maximum iteration limit of $10^6$.

All experiments were conducted using MATLAB (version 9.12) on a Linux workstation equipped with an Intel Xeon Platinum 8375C CPU (2.90 GHz, 128 cores) and 1024 GB of RAM.

	%	Existing methods: LARS-type algorithm \cite{li2007quantile}, interior point method R package kernlab

\subsection{Synthetic data}\label{sec:syn data}
We first evaluate the performance of PALM-KQR, fastKQR, and Gurobi under various parameter settings and problem dimensions using synthetic data. Additionally, to explicitly assess the impact of our proposed preconditioning strategy, we evaluate NALM-KQR, a baseline variant of our framework that solves the inner linear systems using the standard CG method without preconditioning.

We adopt the data generation procedure described in \cite{yuan2006gacv}. Specifically, let the input vector be $x_i = (x_{i,1}, x_{i,2})^\top \in \mathbb{R}^2$, where the components $x_{i,1}$ and $x_{i,2}$ are independently drawn from the uniform distribution on the interval $[0,1]$. The corresponding response $y_i$ is generated as follows:
\begin{equation*}
    y_i = \frac{40\exp\left\{8((x_{i,1}-0.5)^2+(x_{i,2}-0.5)^2)\right\}}{\exp\left\{8((x_{i,1}-0.2)^2+(x_{i,2}-0.7)^2)\right\} + \exp\left\{8((x_{i,1}-0.7)^2+(x_{i,2}-0.2)^2)\right\}} + \varepsilon,
\end{equation*}
where the noise term $\varepsilon$ is drawn from the standard normal distribution.

In these experiments, we employ the radial basis kernel $k(x,x') = \exp(-\gamma \|x-x'\|_2^2)$ and vary the bandwidth parameter $\gamma$, the sample size $n$, and the quantile level $\tau$ across the following sets:
\[
    \gamma \in \{10^{-1}, 10^{-2}, 10^{-3}\},\quad
    n \in \{5\times 10^3, \, 10^4, \, 2\times 10^4\}, \quad
    \tau \in \{0.1, \, 0.5, \, 0.9\}.
\]
For the regularization parameter $\lambda$, we consider a sequence of 50 values logarithmically spaced in the interval $[10^{-2}, 10^0]$. We apply all solvers to the KQR problems corresponding to this full sequence of $\lambda$ values and record the total computation time.  The convergence tolerance is set to $\epsilon = 10^{-8}$.

\begin{table}[!ht]
    \centering
    %\small % Slightly smaller font to fit the larger table
    \caption{Performance comparison of PALM-KQR, NALM-KQR, Gurobi, and fastKQR on synthetic data using the radial basis kernel. The time is reported in ``hours:minutes:seconds''. The column \# indicates the number of instances (out of 50) solved successfully within the 2-hour time limit.}
    \label{tab:Rbf_combined}
    \begin{tabular}{cccrcrcrcrc}
        \toprule
        \multirow{2}{*}{$\gamma$} & \multirow{2}{*}{$n$} & \multirow{2}{*}{$\tau$} & \multicolumn{2}{c}{PALM-KQR} & \multicolumn{2}{c}{NALM-KQR} & \multicolumn{2}{c}{Gurobi} & \multicolumn{2}{c}{fastKQR}\\
         \cmidrule(lr){4-5} \cmidrule(lr){6-7} \cmidrule(lr){8-9} \cmidrule(l){10-11}
        &  &  & Time & \# & Time & \# & Time & \# & Time & \# \\
        \midrule
        \multirow{9}{*}{$10^{-1}$}
        & \multirow{3}{*}{$5\times 10^3$}
          & 0.1 & 45 & 50 & 4:10 & 50 & 4:08 & 50 & 3:17 & 50 \\
        & & 0.5 & 34 & 50 & 4:21 & 50 & 4:12 & 50 & 3:21 & 50 \\
        & & 0.9 & 36 & 50 & 4:14 & 50 & 4:12 & 50 & 3:20 & 50 \\
        \cmidrule{2-11}
        & \multirow{3}{*}{$10^4$}
          & 0.1 & 2:26 & 50 & 51:23 & 50 & 17:50 & 50 & 15:14 & 50 \\
        & & 0.5 & 2:32 & 50 & 50:45 & 50 & 17:50 & 50 & 15:14 & 50 \\
        & & 0.9 & 2:41 & 50 & 50:55 & 50 & 17:50 & 50 & 15:36 & 50 \\
        \cmidrule{2-11}
        & \multirow{3}{*}{$2\times 10^4$}
          & 0.1 & 14:41 & 50 & 2:30:23 & 5 & 1:17:37 & 50 & 1:01:34 & 50 \\
        & & 0.5 & 14:23 & 50 & 2:12:55 & 9 & 1:17:32 & 50 & 59:09 & 50 \\
        & & 0.9 & 13:21 & 50 & 2:45:09 & 6 & 1:17:32 & 50 & 1:01:54 & 50 \\
        \midrule
        \multirow{9}{*}{$10^{-2}$}
        & \multirow{3}{*}{$5\times 10^3$}
          & 0.1 & 40 & 50 & 4:05 & 50 & 4:12 & 50 & 3:09 & 50 \\
        & & 0.5 & 35 & 50 & 4:12 & 50 & 4:10 & 50 & 3:12 & 50 \\
        & & 0.9 & 35 & 50 & 4:14 & 50 & 4:12 & 50 & 3:14 & 50 \\
        \cmidrule{2-11}
        & \multirow{3}{*}{$10^4$}
          & 0.1 & 2:01 & 50 & 50:25 & 50 & 17:50 & 50 & 15:01 & 50 \\
        & & 0.5 & 2:12 & 50 & 50:23 & 50 & 17:50 & 50 & 14:44 & 50 \\
        & & 0.9 & 2:24 & 50 & 50:41 & 50 & 17:52 & 50 & 14:36 & 50 \\
        \cmidrule{2-11}
        & \multirow{3}{*}{$2\times 10^4$}
          & 0.1 & 12:43 & 50 & 2:31:34 & 10 & 1:17:35 & 50 & 56:54 & 50 \\
        & & 0.5 & 12:43 & 50 & 2:14:41 & 11 & 1:17:32 & 50 & 57:09 & 50 \\
        & & 0.9 & 12:45 & 50 & 2:23:09 & 9 & 1:17:33 & 50 & 56:12 & 50 \\
        \midrule
        \multirow{9}{*}{$10^{-3}$}
        & \multirow{3}{*}{$5\times 10^3$}
          & 0.1 & 28 & 50 & 4:10 & 50 & 4:08 & 50 & 3:07 & 50 \\
        & & 0.5 & 29 & 50 & 4:21 & 50 & 4:12 & 50 & 3:03 & 50 \\
        & & 0.9 & 31 & 50 & 4:14 & 50 & 4:12 & 50 & 3:05 & 50 \\
        \cmidrule{2-11}
        & \multirow{3}{*}{$10^4$}
          & 0.1 & 1:23 & 50 & 45:31 & 50 & 17:50 & 50 & 15:14 & 50 \\
        & & 0.5 & 1:37 & 50 & 44:25 & 50 & 17:50 & 50 & 15:14 & 50 \\
        & & 0.9 & 1:51 & 50 & 44:28 & 50 & 17:50 & 50 & 15:36 & 50 \\
        \cmidrule{2-11}
        & \multirow{3}{*}{$2\times 10^4$}
          & 0.1 & 8:21 & 50 & 2:11:33 & 15 & 1:17:32 & 50 & 54:27 & 50 \\
        & & 0.5 & 8:32 & 50 & 2:12:51 & 16 & 1:17:33 & 50 & 55:36 & 50 \\
        & & 0.9 & 8:34 & 50 & 2:17:05 & 17 & 1:17:32 & 50 & 53:49 & 50 \\
        \bottomrule
    \end{tabular}
\end{table}
Table~\ref{tab:Rbf_combined} shows that PALM-KQR consistently outperforms NALM-KQR, Gurobi, and fastKQR on the synthetic problems with the radial basis kernel. For $n=5\times10^3$, PALM-KQR solves all 50 instances in 28--45 seconds, whereas Gurobi and fastKQR require about 3--4 minutes. This corresponds to a speedup of roughly 4--8 times. Its advantage becomes more pronounced as the problem size increases. For $n=2\times10^4$, PALM-KQR solves all 50 instances in about 8--15 minutes, while Gurobi requires about 1 hour 17 minutes and fastKQR about 54--62 minutes. This represents a substantial speedup of approximately 5--6 times over Gurobi and 4--5 times over fastKQR. These results demonstrate both the efficiency and the robustness of PALM-KQR across the tested RBF settings.

\begin{table}[!ht]
    \centering
    \caption{Performance comparison of PALM-KQR, NALM-KQR, Gurobi, and fastKQR on synthetic data using linear ($k(x,x')=x^{\top}x'$) and Laplacian ($k(x,x') = \exp(-0.1 \|x-x'\|_1)$) kernels.  The time is reported in ``hours:minutes:seconds''. The column \# indicates the number of instances (out of 50) solved successfully within the 2-hour time limit.}
    \label{tab:Linear_Laplacian_combined}
    \begin{tabular}{cccrcrcrcrc}
        \toprule
        \multirow{2}{*}{Kernel} & \multirow{2}{*}{$n$} & \multirow{2}{*}{$\tau$} & \multicolumn{2}{c}{PALM-KQR} & \multicolumn{2}{c}{NALM-KQR} & \multicolumn{2}{c}{Gurobi} & \multicolumn{2}{c}{fastKQR}\\
        \cmidrule(lr){4-5} \cmidrule(lr){6-7} \cmidrule(lr){8-9} \cmidrule(l){10-11}
        & & & Time & \# & Time & \# & Time & \# & Time & \# \\
        \midrule
        \multirow{9}{*}{Linear}
        & \multirow{3}{*}{$5\times 10^3$}
          & 0.1 & 10 & 50 & 21 & 50 & 23 & 50 & 17 & 50 \\
        & & 0.5 & 12 & 50 & 28 & 50 & 23 & 50 & 18 & 50 \\
        & & 0.9 & 15 & 50 & 31 & 50 & 24 & 50 & 18 & 50 \\
        \cmidrule{2-11}
        & \multirow{3}{*}{$10^4$}
          & 0.1 & 52 & 50 & 2:12 & 50 & 2:07 & 50 & 1:45 & 50 \\
        & & 0.5 & 54 & 50 & 2:26 & 50 & 2:07 & 50 & 1:48 & 50 \\
        & & 0.9 & 55 & 50 & 2:23 & 50 & 2:07 & 50 & 1:49 & 50 \\
        \cmidrule{2-11}
        & \multirow{3}{*}{$2\times 10^4$}
          & 0.1 & 4:17 & 50 & 9:16 & 50 & 9:01 & 50 & 8:23 & 50 \\
        & & 0.5 & 4:27 & 50 & 9:54 & 50 & 9:01 & 50 & 8:47 & 50 \\
        & & 0.9 & 4:26 & 50 & 10:02 & 50 & 9:01 & 50 & 8:44 & 50 \\
        \midrule
        \multirow{9}{*}{Laplacian}
        & \multirow{3}{*}{$5\times 10^3$}
          & 0.1 & 2:19 & 50 & 12:12 & 50 & 10:34 & 50 & 9:27 & 50 \\
        & & 0.5 & 2:23 & 50 & 12:45 & 50 & 10:34 & 50 & 9:28 & 50 \\
        & & 0.9 & 2:21 & 50 & 13:08 & 50 & 10:34 & 50 & 9:29 & 50 \\
        \cmidrule{2-11}
        & \multirow{3}{*}{$10^4$}
          & 0.1 & 5:16 & 50 & 50:34 & 50 & 24:35 & 50 & 24:01 & 50 \\
        & & 0.5 & 5:23 & 50 & 50:45 & 50 & 24:36 & 50 & 24:11 & 50 \\
        & & 0.9 & 5:22 & 50 & 50:45 & 50 & 24:36 & 50 & 24:34 & 50 \\
        \cmidrule{2-11}
        & \multirow{3}{*}{$2\times 10^4$}
          & 0.1 & 31:47 & 50 & 2:07:35 & 2 & 2:17:37 & 46 & 2:01:34 & 50 \\
        & & 0.5 & 31:39 & 50 & 2:12:17 & 2 & 2:17:37 & 46 & 1:59:09 & 50 \\
        & & 0.9 & 31:38 & 50 & 2:15:29 & 2 & 2:17:32 & 46 & 1:58:23 & 50 \\
        \bottomrule
    \end{tabular}
\end{table}

Table~\ref{tab:Rbf_combined} also shows that the runtime of PALM-KQR decreases as $\gamma$ decreases from $10^{-1}$ to $10^{-3}$. For example, when $n=2\times10^4$, the runtime drops from about 14 minutes to about 8 minutes. This indicates that PALM-KQR effectively leverages the low-rank structure of the kernel matrix to reduce runtime. In contrast, the runtimes of Gurobi and fastKQR are much less sensitive to $\gamma$, indicating that these solvers do not exploit the spectral properties of the kernel matrix effectively.

Finally, a comparison between PALM-KQR and its non-preconditioned variant, NALM-KQR, highlights the critical role of the proposed preconditioner. For smaller-scale problems ($n=5\times10^3$), NALM-KQR remains competitive, with runtimes comparable to Gurobi and fastKQR (around 3-4 minutes). However, its performance degrades severely as the problem dimension increases. At $n=2\times10^4$, NALM-KQR failed to solve the majority of the 50 instances within the 2-hour time limit (typically solving only 5--17 instances), rendering it far less efficient than the commercial benchmarks. This sharply contrasts with PALM-KQR, which consistently solved all instances well within the time limit. These results demonstrate that the proposed Nystr\"{o}m-based preconditioner is essential for ensuring both robustness and efficiency of the PALM-KQR framework.

We further report results for the linear and Laplacian kernels in Table~\ref{tab:Linear_Laplacian_combined}. The linear-kernel setting is less demanding than the nonlinear-kernel settings, and all solvers are correspondingly faster. Even so, PALM-KQR still achieves the best performance. For $n=2\times10^4$, PALM-KQR requires about 4 minutes, compared with about 8--9 minutes for Gurobi and fastKQR and about 9--10 minutes for NALM-KQR.
The Laplacian-kernel results present a more challenging setting. At $n=2\times10^4$, PALM-KQR solves all 50 instances in about 31 minutes, whereas Gurobi and fastKQR require about 2 hours, with Gurobi failing to solve all instances within the 2-hour limit. The non-preconditioned NALM-KQR performs much worse in this regime, solving only 2 of the 50 instances within the time limit. These results further confirm the importance of the proposed preconditioner in difficult large-scale settings.

\begin{figure}[htbp]
    \centering
    \begin{subfigure}[b]{0.48\textwidth}
        \includegraphics[width=\textwidth]{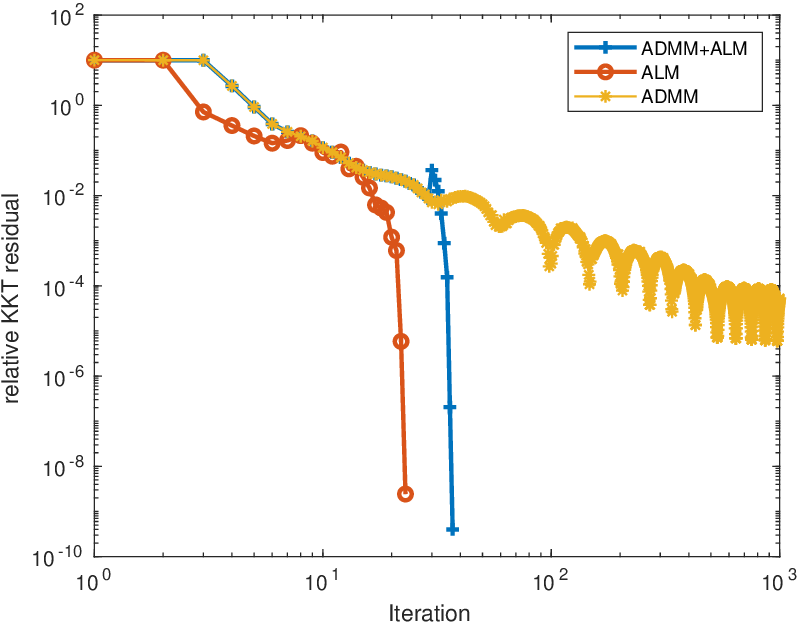}
    \end{subfigure}
    \hfill
    \begin{subfigure}[b]{0.48\textwidth}
        \includegraphics[width=\textwidth]{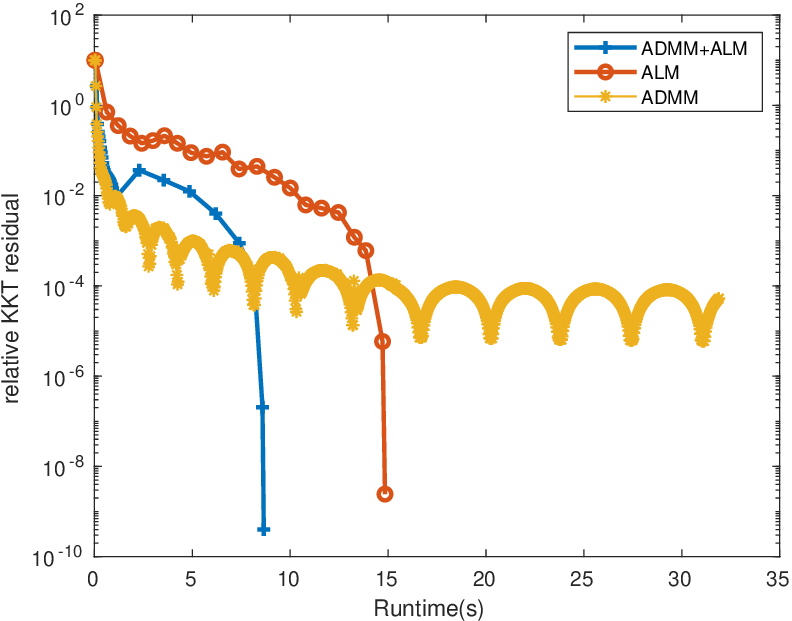}
    \end{subfigure}
    \caption{Convergence profiles of PALM-KQR (labeled as ADMM+ALM) versus its standalone components: ADMM (Phase~I) and semismooth Newton ALM (Phase~II). The plots display the relative KKT residual against iteration count (Left) and computation time (Right).}
    \label{fig:performance_comparison}
\end{figure}
We now analyze the internal mechanism of the PALM-KQR framework to validate its two-phase design. Figure~\ref{fig:performance_comparison} contrasts the performance of the full PALM-KQR framework against its standalone constituent phases, the inexact ADMM (Phase I) and the semismooth Newton ALM (Phase II), under identical experimental conditions. For this evaluation, we fix the problem size at $n=10^{4}$ and set  $\tau=0.1$, $\gamma=10^{-3}$,  $\lambda=10$, and $\epsilon=10^{-8}$. The maximum iteration limit is set to 1000 for both the standalone ADMM and semismooth Newton ALM, with all other internal settings matching their respective configurations within PALM-KQR.
The results illustrated in Figure~\ref{fig:performance_comparison} reveal the complementary convergence characteristics of the two methods. The standalone ADMM exhibits rapid initial progress, quickly reducing the error in the early stages, but suffers from slow ``tail convergence'', effectively plateauing before reaching the required tolerance. Conversely, the standalone semismooth Newton ALM (initialized from scratch) demonstrates slower initial progress but achieves rapid acceleration once it enters the local neighborhood of the solution. PALM-KQR effectively synthesizes these strengths: it utilizes ADMM to rapidly navigate to the solution's vicinity (the domain of attraction), and then switches to the semismooth Newton ALM to exploit its superlinear local convergence. Consequently, although the hybrid approach may incur a higher total iteration count (due to the accumulated ADMM steps), it achieves the target accuracy with significantly lower total computation time than using the semismooth Newton ALM alone.

\begin{table}[!htbp]
    \centering
    %\small % Smaller font to ensure the table fits the page width
    \caption{Computation time comparison of PALM-KQR, Gurobi, and fastKQR on real-world energy forecasting data for Switzerland ($n=8760$) and Germany ($n=8759$). The computation time is reported in ``hours:minutes:seconds''.}
    \label{tab:RealData_Combined}
    \begin{tabular}{ccccccccc}
        \toprule
        \multirow{2}{*}{$\gamma$} & \multirow{2}{*}{Year} & \multirow{2}{*}{$\tau$} & \multicolumn{3}{c}{Switzerland} & \multicolumn{3}{c}{Germany} \\
         \cmidrule(lr){4-6} \cmidrule(l){7-9}
         &  &  & \multicolumn{1}{c}{PALM-KQR} & \multicolumn{1}{c}{Gurobi} & \multicolumn{1}{c}{fastKQR} & \multicolumn{1}{c}{PALM-KQR} & \multicolumn{1}{c}{Gurobi} & \multicolumn{1}{c}{fastKQR} \\
        \midrule
        \multirow{6}{*}{$10^{-1}$}
        & \multirow{3}{*}{2021}
          & 0.1 & 11:27 & 1:15:00 & 1:02:25 & 10:07 & 1:02:14 & 55:14 \\
        & & 0.5 & 10:58 & 1:15:00 & 1:02:31 & 10:11 & 1:02:14 & 55:14 \\
        & & 0.9 & 11:34 & 1:15:00 & 1:02:32 & 10:16 & 1:02:14 & 55:23 \\
        \cmidrule{2-9}
        & \multirow{3}{*}{2022}
          & 0.1 & 9:07 & 58:43 & 51:17 & 10:21 & 1:07:11 & 57:28 \\
        & & 0.5 & 9:10 & 58:43 & 51:23 & 10:27 & 1:07:11 & 57:28 \\
        & & 0.9 & 9:07 & 58:43 & 51:29 & 10:17 & 1:07:11 & 57:34 \\
        \midrule
        \multirow{6}{*}{$10^{-2}$}
        & \multirow{3}{*}{2021}
          & 0.1 & 10:12 & 1:15:00 & 1:01:22 & 9:44 & 1:02:14 & 52:33 \\
        & & 0.5 & 10:11 & 1:15:00 & 1:01:23 & 9:45 & 1:02:14 & 52:14 \\
        & & 0.9 & 10:17 & 1:15:00 & 59:56 & 9:51 & 1:02:14 & 52:51 \\
        \cmidrule{2-9}
        & \multirow{3}{*}{2022}
          & 0.1 & 8:14 & 58:43 & 49:58 & 9:17 & 1:07:11 & 54:27 \\
        & & 0.5 & 8:25 & 58:43 & 49:46 & 9:21 & 1:07:11 & 54:21 \\
        & & 0.9 & 8:26 & 58:43 & 49:29 & 9:17 & 1:07:11 & 54:55 \\
        \midrule
        \multirow{6}{*}{$10^{-3}$}
        & \multirow{3}{*}{2021}
          & 0.1 & 7:35 & 1:15:01 & 53:23 & 7:17 & 1:02:14 & 43:56 \\
        & & 0.5 & 7:32 & 1:15:00 & 53:38 & 7:19 & 1:02:14 & 43:47 \\
        & & 0.9 & 7:38 & 1:15:00 & 53:31 & 7:23 & 1:02:14 & 43:29 \\
        \cmidrule{2-9}
        & \multirow{3}{*}{2022}
          & 0.1 & 6:49 & 58:43 & 42:21 & 7:28 & 1:07:11 & 43:33 \\
        & & 0.5 & 7:01 & 58:43 & 42:23 & 7:28 & 1:07:11 & 43:36 \\
        & & 0.9 & 6:54 & 58:43 & 41:48 & 7:19 & 1:07:11 & 43:38 \\
        \bottomrule
    \end{tabular}
\end{table}

\subsection{Energy forecasting data}
We further evaluate the performance of PALM-KQR, fastKQR, and Gurobi using a real-world energy forecasting dataset sourced from the Energy Charts platform. Accurate energy demand forecasting is critical for grid stability but remains challenging due to complex nonlinear dependencies on weather and calendar effects. To ensure a consistent and rigorous comparison with existing benchmarks, we adopt the preprocessed hourly electricity load data and the specific variable selection strategy validated in \cite{pernigo2025probabilistic}. The feature set comprises temperature, wind speed, hour of day, month, holiday indicators, and day of week, resulting in a feature dimension of $p=6$.

Regarding the experimental configuration, we retain the identical settings used in Section~\ref{sec:syn data}. We employ the radial basis kernel with $\gamma \in \{10^{-1}, 10^{-2}, 10^{-3}\}$ and test across quantile levels $\tau \in \{0.1, 0.5, 0.9\}$. For each case, we solve the KQR problem over a regularization path of 50 $\lambda$ values logarithmically spaced in $[10^0, 10^2]$, with the convergence tolerance fixed at $\epsilon = 10^{-8}$.

Table~\ref{tab:RealData_Combined} details the total computation times for PALM-KQR, Gurobi, and fastKQR on the real-world energy forecasting instances. Notably, all three algorithms successfully solved every one of the 50 instances to the target tolerance of $\epsilon=10^{-8}$.
The data  demonstrates the superior computational efficiency of PALM-KQR. Across all instances, the total runtime of PALM-KQR consistently remained in the range of approximately 6 to 11 minutes. In sharp contrast, the commercial solver Gurobi required between 58 minutes and 1 hour 15 minutes, while fastKQR's runtime varied from 41 minutes to 1 hour 2 minutes. Specifically, PALM-KQR is approximately 4 to 10 times faster than fastKQR and an even more impressive 5 to 12 times faster than Gurobi.
We also observe that PALM-KQR's computation time exhibits a  downward trend as $\gamma$ decreases. As $\gamma$ is reduced from $10^{-1}$ to $10^{-3}$, the average runtime for PALM-KQR drops from around 10--11 minutes to 7--8 minutes. This suggests that our algorithm effectively leverages the numerical low-rank structure of the kernel matrix, which becomes more pronounced at smaller bandwidths. Conversely, the choice of the quantile level $\tau$ had a negligible impact on the runtime for all algorithms, indicating their stability with respect to this parameter.

\section{Conclusion}\label{sec:conclusion}
In this paper, we proposed PALM-KQR,  a novel and highly efficient two-phase optimization framework designed to address the significant computational challenges of large-scale KQR. Our approach strategically combines a fast inexact ADMM to warm-start a high-accuracy semismooth Newton ALM for fast local convergence. A key innovation of our work is the development of a specialized preconditioning strategy, leveraging low-rank approximations of the kernel matrix. This strategy effectively mitigates the severe ill-conditioning of the linear systems arising in the Newton steps, which has been a major bottleneck in large-scale KQR computation.
Extensive numerical experiments on both synthetic and real-world energy forecasting datasets demonstrate that PALM-KQR  achieves state-of-the-art performance. The proposed algorithm exhibits substantial improvements in both computational speed and robustness compared to  leading commercial solvers (Gurobi) and specialized KQR packages (fastKQR).
Future research directions include exploring massive parallelization strategies to further scale the algorithm and investigating adaptive preconditioning techniques to broaden its applicability to an even wider range of settings.

\bibliographystyle{abbrv}

%\bibliography{ref_new}
	%\bibliography{Notebib}
    \appendix
	\section{Some concepts from convex analysis}\label{sec:prelim}
    This appendix collects several standard notions from convex analysis and variational analysis used in the semismooth Newton method. A convex function $f:\R^n \to (-\infty,+\infty]$ is called proper if $f(x)<+\infty$ for at least one $x$ and $f(x)>-\infty$ for every $x$. It is called closed if all its sublevel sets $\{x\mid f(x)\le \alpha\}$ are closed.

	Let $f:\R^n\to(-\infty,+\infty]$ be proper, closed, and convex, and let $\sigma>0$. The Moreau-Yosida regularization (or Moreau envelope) of $f$ is
	\begin{equation*}
		\mathcal{M}_{f}^{\sigma}(x):=\min_{z\in \mathbb{R}^n}\left\{f(z)+\frac{\sigma}{2}\|z-x\|_2^2\right\},\quad x\in \mathbb{R}^n.
	\end{equation*}
	The unique minimizer is denoted by
	\begin{equation*}
		{\rm Prox}_{f}^{\sigma}(x):=\operatornamewithlimits{argmin}_{z\in \mathbb{R}^n}\left\{f(z)+\frac{\sigma}{2}\|z-x\|_2^2\right\},
	\end{equation*}
	and the mapping ${\rm Prox}_{f}^{\sigma}$ is called the proximal mapping of $f$. A basic fact we use is that $\mathcal{M}_{f}^{\sigma}$ is continuously differentiable, with gradient
	\begin{equation}\label{def-grad-MY}
		\nabla \mathcal{M}_{f}^{\sigma}(x)=\sigma\left(x-{\rm Prox}_{f}^{\sigma}(x)\right),\quad x\in\mathbb{R}^n.
	\end{equation}
	The Moreau identity is
	\begin{equation*}
		x = {\rm Prox}_f^{\sigma^{-1}}(x) + \sigma {\rm Prox}_{f^*}^{\sigma}(\sigma^{-1}x).
	\end{equation*}
	See \cite[Chapter 1.G]{rockafellar2009variational} for further properties of the Moreau envelope and proximal mappings.

	Next we recall the notion of semismoothness. Let $F:\mathbb{R}^n \to \mathbb{R}^m$ be locally Lipschitz. By Rademacher's theorem, $F$ is differentiable almost everywhere. The Clarke generalized Jacobian of $F$ at $x\in\mathbb{R}^n$ is defined by
	\begin{equation*}
		\partial F(x):=\operatorname{conv}\left\{\lim_{k\to\infty}JF(x^k)\,:\,x^k\to x,\ \text{$F$ is differentiable at each $x^k$}\right\},
	\end{equation*}
	where $JF(x)$ denotes the Jacobian matrix of $F$ and $\operatorname{conv}(\cdot)$ denotes the convex hull. We say that $F$ is semismooth at $x$ if $F$ is directionally differentiable at $x$ and, for any $V_h\in \partial F(x+h)$,
	\[
		F(x+h) - F(x) - V_h h = o(\|h\|_2) \quad \mbox{as $h\to 0$}.
	\]
	Detailed properties of semismooth functions can be found in \cite{facchinei2007finite}.

    \section{Convergence properties of Algorithms~\ref{alg:alm} and~\ref{alg:ssnal}}\label{sec:prelim-con}
    We first state the convergence properties of the outer ALM scheme, which follow directly from \cite{rockafellar1976monotone,rockafellar1976augmented}. Since solving the ALM subproblem \eqref{def:phi_k} exactly is difficult, we introduce the following inexactness criteria, stated directly in terms of the function $\phi_k$ defined in \eqref{def:phi_k}.
	\begin{enumerate}
		\item[$(A)$] $\phi_k(\alpha^{k+1})-\inf \phi_k\leq \epsilon_k^2/(2\sigma_k),\quad \epsilon_k\ge 0,\ \sum_{k=0}^{\infty}\epsilon_k<\infty$.
		\item[$(B)$] $\phi_k(\alpha^{k+1})-\inf \phi_k\leq (\delta_k^2/(2\sigma_k))\|(\beta^{k+1},z^{k+1})-(\beta^k,z^k)\|_2^2,\quad \delta_k\ge 0,\ \sum_{k=0}^{\infty}\delta_k<\infty$.
	\end{enumerate}

	\begin{theorem}
        Let $\{(\alpha^k,v^k,\beta^k,z^k)\}$ be the sequence generated by Algorithm~\ref{alg:alm} with stopping criterion $(A)$. Then $\{(\alpha^k,v^k)\}$ is bounded and converges to the unique optimal solution of \eqref{dual-1}. In addition, $\{(\beta^k,z^k)\}$ is bounded and converges to an optimal solution associated with \eqref{primal-1}. Furthermore, if Algorithm~\ref{alg:alm} is executed with stopping criteria $(A)$ and $(B)$, then, for all $k$ sufficiently large,
		\begin{equation*}
            {\rm dist}((\beta^{k+1},z^{k+1}),\Omega)\le \theta_k\,{\rm dist}((\beta^k,z^k),\Omega),
		\end{equation*}
        where $\Omega$ denotes the solution set of $(\beta,z)$ associated with \eqref{primal-1}, $\theta_k<1$, and $\theta_k\to\theta_\infty<1$.
	\end{theorem}

    We next summarize the convergence properties of the semismooth Newton method used to solve the ALM subproblem.
    \begin{theorem}\label{thm:ssn_convergence}
    Let $\{\alpha^t\}$ be the sequence generated by Algorithm~\ref{alg:ssnal}. Then, $\{\alpha^t\}$ converges globally to the unique optimal solution $\bar{\alpha}$ of  \eqref{eq:optimality_condition}. Furthermore,   the local superlinear convergence rate holds:
    \begin{equation*}
        \|\alpha^{t+1}-\bar{\alpha}\|_2=\mathcal{O}(\|\alpha^{t}-\bar{\alpha}\|_2^{1+\iota}),
    \end{equation*}
    where the exponent $\iota\in(0,1]$ is the parameter defined in Algorithm~\ref{alg:ssnal}.
    \end{theorem}

    \section{Proof of Theorem~\ref{thm:pcg rate}}\label{app:thm3proof}
    To prove Theorem~\ref{thm:pcg rate}, we first recall the following RPCholesky error bound from \cite[Theorem~5.1]{chen2023randomly}.

	\begin{theorem}%[\cite{chen2023randomly}]
		\label{thm:RPC error}
		Let $A$ be a positive semidefinite matrix. Fix $\ell \in\mathbb{N}$ and $\gamma>0$. The rank-$r$ column Nystr\"{o}m approximation $\widehat{A}$ produced by $r\leq n$ steps of RPCholesky, i.e., $F=\mathrm{RPCholesky}(A,r),\widehat{A} = FF^{\top}$, attains the bound
		%The $r$-rank column Nystr$\ddot{o}$m approximation $\widehat{A}$ generated by algorithm RPCholesky can attain the bound
		\begin{equation*}
			\mathbb{E}[ {\rm tr}(A-\widehat{A}) ] \leq (1+\gamma){\rm tr}(A-\lfloor A\rfloor_\ell),
		\end{equation*}
		if the number $r$ of columns satisfies
		\begin{equation*}
			r\geq \frac{\ell}{\gamma}+\min\left\{\ell\log\left(\frac{1}{\gamma\eta}\right),\ell+\ell\log_+\left(\frac{2^\ell}{\gamma}\right)\right\}.
		\end{equation*}
		Here the relative error $\eta$ is defined by $\eta:={\rm tr}(A-\lfloor A\rfloor_\ell)/{\rm tr}(A)$, and $\log_+(x) = \max(\log(x),0)$.
	\end{theorem}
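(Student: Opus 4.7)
The plan is to analyze RPCholesky as a stochastic process on the sequence of residual matrices $R_t := A - \widehat{A}_t$, where $R_0 = A$ and each step performs a randomized pivoted Cholesky update $R_{t+1} = R_t - R_t e_{i_t} e_{i_t}^{\top} R_t / R_t(i_t,i_t)$ with pivot $i_t$ sampled from the distribution $\mathbb{P}(i_t = j \mid R_t) = R_t(j,j)/{\rm tr}(R_t)$. Each $R_t$ remains positive semidefinite and is monotone non-increasing in the Loewner order, so the natural object to track is the expected trace $\mathbb{E}[{\rm tr}(R_t)]$. The overall strategy is a one-step-at-a-time potential-function analysis, with the potential chosen as the expected excess of ${\rm tr}(R_t)$ over the optimal tail ${\rm tr}(A - \lfloor A \rfloor_\ell)$.

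First I would derive the exact one-step identity
\begin{equation*}
\mathbb{E}[{\rm tr}(R_{t+1}) \mid R_t] = {\rm tr}(R_t) - \sum_j \frac{R_t(j,j)}{{\rm tr}(R_t)} \cdot \frac{\|R_t e_j\|_2^2}{R_t(j,j)} = {\rm tr}(R_t) - \frac{\|R_t\|_F^2}{{\rm tr}(R_t)},
\end{equation*}
which is the cleanest characterization of the random trace decrement. To convert this into a useful contraction, I would use the fact that residuals of pivoted Cholesky obey Cauchy-type eigenvalue interlacing: the eigenvalues of $R_t$ are dominated by those of $A$ shifted by $t$ positions, i.e.\ $\lambda_i(R_t) \leq \lambda_{\max(1,i-t)}(A)$. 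Combined with the inequality $\|R_t\|_F^2 \geq {\rm tr}(R_t)^2/\mathrm{rank}(R_t)$ and a splitting of the spectrum of $R_t$ into a low-rank ``head'' (top $\ell$ eigenvalues) and a ``tail'' of cumulative mass at least ${\rm tr}(A - \lfloor A \rfloor_\ell)$, this yields a recursion of the form $\mathbb{E}[\Phi_{t+1}] \leq (1 - c_t)\mathbb{E}[\Phi_t]$ for $\Phi_t := {\rm tr}(R_t) - {\rm tr}(A - \lfloor A \rfloor_\ell)$, where the contraction factor $c_t$ depends on the current excess mass.

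Next I would iterate the recursion over $r$ steps and separate two regimes. In the initial ``concentrated'' regime, $\Phi_t$ is large relative to the tail, so each step reduces it by a constant multiplicative factor; the number of steps needed to drop $\Phi_t$ into the tail-dominated regime produces the logarithmic warm-up term $\ell \log(1/(\gamma\eta))$, since the starting excess is precisely $(1-\eta)\cdot{\rm tr}(A)/\eta$ relative to the target. Once in the tail-dominated regime, the contraction weakens and roughly $\ell/\gamma$ additional steps are required to drive the expected excess below $\gamma \cdot {\rm tr}(A - \lfloor A \rfloor_\ell)$. The two alternatives inside the $\min$ correspond to two ways of bounding the warm-up: one tracks the explicit factor $\eta$, while the other uses a crude but $\eta$-free bound of the form $2^\ell$, obtained by allowing worst-case behavior for the first $\ell$ pivots before the interlacing argument kicks in.

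The main obstacle is the nonlinearity introduced by sampling pivots according to the random diagonal of $R_t$: although the identity above is exact, bounding $\|R_t\|_F^2/{\rm tr}(R_t)$ in terms of $\Phi_t$ itself — the form required to drive geometric decay — requires a careful eigenvalue comparison and is sensitive to whether the current residual still carries mass in the ``head'' or has collapsed onto its tail. Handling this transition cleanly, so that the two regimes splice together to give the precise $\min$ in the bound, is the delicate point. A secondary technicality is verifying that the argument survives passage from conditional to unconditional expectation via the tower property, which is routine once the pointwise contraction is in place.
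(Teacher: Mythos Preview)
The paper does not prove this theorem. It is stated in the appendix with an explicit attribution --- ``we need the following RPCholesky error bound \cite[Theorem~5.1]{chen2023randomly}'' --- and is then invoked as a black box in the proof of Theorem~\ref{thm:pcg rate}. There is therefore no in-paper argument to compare your proposal against.

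That said, your sketch is broadly the right shape for how the result is actually proved in the cited source. The one-step identity
\[
\mathbb{E}[{\rm tr}(R_{t+1})\mid R_t]={\rm tr}(R_t)-\frac{\|R_t\|_F^2}{{\rm tr}(R_t)}
\]
is exactly the engine of the original analysis, and the idea of lower-bounding $\|R_t\|_F^2$ by splitting the spectrum of $R_t$ into a rank-$\ell$ head and a tail controlled by ${\rm tr}(A-\lfloor A\rfloor_\ell)$ is the correct move. One point to be careful about: the interlacing you invoke, $\lambda_i(R_t)\leq\lambda_{\max(1,i-t)}(A)$, is stronger than what is needed and not quite what the original proof uses --- the Schur-complement residual satisfies the simpler monotonicity $\lambda_i(R_t)\leq\lambda_i(A)$ for all $i$, and that alone suffices to bound the tail of $R_t$ by the tail of $A$. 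The two-regime splicing you describe (geometric decay while the head dominates, then a slower phase costing $\ell/\gamma$ steps) and the two alternative warm-up bounds yielding the $\min$ are accurate descriptions of the structure of the argument in \cite{chen2023randomly}.
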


\begin{proof}[\textbf{Proof of Theorem~\ref{thm:pcg rate}}]
		We have $A \succeq P$, since $\widehat{K}$ is a rank-$r$ column Nystr\"{o}m approximation of $K$ thus satisfying  $ K \succeq \widehat{K}$(see \cite[Lemma~2.1]{frangella2023randomized}). Therefore, $ P^{-1/2}AP^{-1/2}\succeq I $ and $\lambda_{\min} (P^{-1/2}AP^{-1/2}) \geq 1$.
Next, by noting that
$$P^{-1/2}AP^{-1/2} = P^{-1/2}(P + K - \widehat{K})P^{-1/2} = I + P^{-1/2}(K-\widehat{K})P^{-1/2},$$
we have that
\begin{equation*}\lambda_{\max}(P^{-1/2}AP^{-1/2}) \leq
1 + \lambda_{\max}(P^{-1})  \lambda_{\max}(K-\widehat{K}) \leq
1 + {\rm tr}(K-\widehat{K}) /\mu,
\end{equation*}
		where the last inequality is due to $\lambda_{\max}(A) \leq {\rm tr}(A),\ A\in\mathbb{S}^n_{+}$ and $\lambda_{\min}(P)\geq\mu$.
		Combining the bounds of minimum and maximum eigenvalues, we can obtain the condition number bound
		\begin{equation}\label{eq:condition bound}
			\kappa\left(P^{-1/2} A P^{-1/2}\right)\leq 1+{\rm tr}(K-\widehat{K})/\mu.
		\end{equation}
		Let $\ell = {\rm rank}_{\mu}(K)$. We analyze two cases based on the sampling size $r$.

		Case 1: Trivial Bound. If
		\begin{equation*}
			n \leq \ell \left(1 +\log\left(\frac{{\rm tr} (K)}{{\rm tr}(K-\lfloor K\rfloor_\ell)}\right)\right),
		\end{equation*}
		we can set the sampling size $r=n$. This choice implies $\widehat{K}=K$, as all principal submatrices are selected. Consequently, ${\rm tr}(K-\widehat{K}) = 0$, and the bound in \eqref{eq:condition bound} simplifies to $\kappa(\cdot) \leq 1$. This case is trivial.

		Case 2: General Bound. We now consider the more general case. Let the number of samples $r$ be chosen to satisfy
		$$r \geq \ell \left(1 +\log\left(\frac{{\rm tr} (K)}{{\rm tr}(K-\lfloor K\rfloor_\ell)}\right)\right).$$
		Under this condition, Theorem~\ref{thm:RPC error} provides a bound on the expected approximation error:
		\begin{equation*}
			\mathbb{E}[{\rm tr}(K-\widehat{K})]\leq
			2{\rm tr}(K-\lfloor K\rfloor_\ell)
			=2\sum_{i=\ell+1}^n\lambda_i(K) \leq 2\mu.
		\end{equation*}
		Therefore, together with \eqref{eq:condition bound}, we have that
		\begin{equation*}
			\mathbb{E}\left[\kappa\left(P^{-1/2}AP^{-1/2}\right) \right]\leq 3.
		\end{equation*}
		By Markov's inequality, it holds that
		\begin{equation*}
			\mathbb{P}\left(\kappa\left(P^{-1/2}AP^{-1/2}\right)\leq 3/\delta\right)\geq 1-\delta.
		\end{equation*}
		Finally, it follows from \cite[Theorem 6.29]{saad2003iterative} that
		\begin{equation*}
			\frac{\|d^{\,t}-d^*\|_{A}}{\|d^*\|_{A}}
\leq 2 \left(\frac{\sqrt{3/\delta}-1}{\sqrt{3/\delta}+1}\right)^t
\leq 2 e^{-2t\sqrt{\delta/3}}
\leq 2 e^{-t\sqrt{\delta}}
\leq \varepsilon, \mbox{ for } t\geq
\delta^{-1/2}\log({2}/{\varepsilon}),
		\end{equation*}
		with the failure probability at most $\delta$.

	\end{proof}

    \end{document}